\newtheorem{theorem}{Theorem}
\newtheorem{definition}{Definition}
\newtheorem{lemma}{Lemma}
\newtheorem{proposition}{Proposition}
\newcommand*{\QEDB}{\hfill\ensuremath{\square}}%
\tikzset{
        block/.style = {draw, rectangle, 
                        minimum height=1cm, 
                        minimum width=2cm},
        input/.style = {coordinate,node distance=1cm},
        output/.style = {coordinate,node distance=4cm},
        arrow/.style={draw, -latex,node distance=2cm},
        pinstyle/.style = {pin edge={latex-, black,node distance=2cm}},
        sum/.style = {draw, circle, node distance=1cm}
}
\title{Passivity Analysis of Higher Order Evolutionary Dynamics and Population Games}
\author{M. A. Mabrok and Jeff S. Shamma
\thanks{M. A. Mabrok (m.a.mabrok@gmail.com) and Jeff S. Shamma (jeff.shamma@kaust.edu.sa) are with the King Abdullah University of Science and Technology (KAUST),
Computer, Electrical and Mathematical Sciences and Engineering Division (CEMSE), Thuwal 23955--6900, Saudi Arabia. Research supported by funding from KAUST.}}
\begin{document}

\maketitle

\begin{keywords}
Learning in games, evolutionary games, passivity, population games.
\end{keywords}

\begin{abstract}

In population games, a large population of players, modeled as a continuum, is divided into subpopulations, and the fitness or payoff of each subpopulation depends on the overall population composition. Evolutionary dynamics describe how the population composition changes in response to the fitness levels, resulting in a closed-loop feedback system. Recent work established a connection between passivity theory and certain classes of population games, namely so-called ``stable games''. In particular, it was shown that a combination of stable games and (an analogue of) passive evolutionary dynamics results in stable convergence to Nash equilibrium. This paper considers the converse question of necessary conditions for evolutionary dynamics to exhibit stable behaviors for all generalized stable games. Here, generalization  refers to ``higher order" games where the population payoffs may be a dynamic function of the population state. Using methods from robust control analysis, we show that if an evolutionary dynamic does not satisfy a passivity property, then it is possible to construct a generalized stable game that results in instability. The results are illustrated on selected evolutionary dynamics with particular attention to replicator dynamics, which are also shown to be lossless, a special class of passive systems.  

\end{abstract}

\section{Introduction}

Population games  \cite{Sandholm2010, Hofbauer1998} model interactions among a large number of players, or agents, in which each agent's payoff or fitness depends on its own strategy and the distribution of strategies of other agents. There has been extensive research in a variety of settings, ranging from societal \cite{Young1998} to biological \cite{Smith1982} to engineered \cite{Marden2015}.

A central question in population games, as well as the related topic of learning in games \cite{Fudenberg1998,Hart2005,Young2005},  is understanding the long run behavior of player strategies. In particular, under what conditions do population strategies converge to a solution concept such as Nash equilibrium? The outcome depends on both the underlying game and the particular evolutionary dynamics (e.g., \cite{Hofbauer2003}), and behaviors can range from convergence for classes of game/dynamics pairings \cite{Hofbauer2009} to chaos in seemingly simple settings \cite{Sato2002}. Furthermore, a specific game can exhibit inherent obstacles to convergence for broad classes of evolutionary dynamics \cite{Hart2003}. Contrary to Nash equilibrium, there are relaxed solution concepts, such as coarse correlated equilibria, that are universally (i.e., for all games) induced by various forms evolutionary dynamics \cite{Fudenberg1995, Hart2001}.

Of specific interest herein is the class of population games called \textit{stable games} \cite{Hofbauer2009}. These games exhibit a property called ``self-defeating externalities''. Whenever a segment of the population revises its strategies, the payoff gains in the adopted strategy are less than the payoff gains of the abandoned strategy. It was shown that the class of stable games results in convergence to Nash equilibrium when paired with a variety of evolutionary dynamics. Following work \cite{Fox2013} established a connection between stable games and passivity theory \cite{Willems1972}. Generally speaking, it was shown that stable games exhibit a property related to passivity. Furthermore,  various  evolutionary dynamics also exhibit a form of passivity. Accordingly, since interconnections of passive dynamical
systems exhibit stable behavior, one can conclude  that  passive evolutionary
dynamics coupled with stable games exhibit stable  behavior. 

The connection to passivity enables the opportunity to analyze in a similar way broader class of both games and evolutionary dynamics. Of particular interest here are higher order games and higher order dynamics. In the canonical models of population games, the fitness of various population strategies is a static function of the population composition. In a higher order model, this dependence can be dynamic, e.g., as a model of path dependencies \cite{Fox2013}. Likewise, in canonical forms of evolutionary dynamics, the number of states is equal to the number of population strategies. Higher order dynamics, through the introduction of auxiliary states, can exhibit qualitatively different behaviors. For example, 
instabilities \cite{Hart2003} or even chaos \cite{Sato2002} can be eliminated through modifications of standard evolutionary dynamics that reflect a form of myopic anticipation \cite{Shamma2005,Arslan2006}. Recent work has shown that higher order variants \cite{Laraki2013} of the well know replicator dynamics can lead to the elimination
of weakly dominated strategies, followed by the iterated deletion of strictly dominated strategies, a property not exhibited by standard replicator dynamics. 
  
This paper considers the following converse question: \textit{Under what conditions does a evolutionary dynamic stabilize all stable games?} In addressing this question, we will admit both higher order evolutionary dynamics and higher order stable games.  Using methods from robust control analysis, we show that if an evolutionary dynamic does not satisfy a passivity property, then it is possible to construct a higher order stable game that results in instability. The results are similar in spirit to prior work on the necessity of a small gain condition to stabilize certain classes of feedback interconnections \cite{Shamma1991,Shamma1993,Freeman2001}. 

The remainder of this paper is organized as follows: Section II presents preliminary material on population games and passivity. Section III establishes a necessity condition for stable interconnection with passive systems. Section IV specializes the results to population games and  presents illustrative simulations. Finally, Section V contains concluding remarks.

\section{Preliminaries and notations }\label{sec:PAN}

In this section,  some preliminaries and notations  form game  theory and passivity theory are provided  in order to establish our results. 

\subsection{Passivity  theory}
Passivity implies  useful properties such as stability, and  the importance of passivity as tool in nonlinear control of interconnected systems---unlike Lyapunov stability criteria---relays  on the fact that any  set of  passive sub-systems in parallel or feedback configuration forms a passive  system.  In other words,  by ensuring that every subsystem is passive, a complex structure of subsystems  can be built  to satisfy certain properties.    

Consider $\Sigma$ to be a   nonlinear dynamical system with the following state space realization: 
\begin{align}\label{eq:nonlinearsys}
\dot{x} & =f(x,u)\\
y & =g(x,u), \notag
\end{align}
where, $u \in R^m$ is the system's input vector, $y \in R^m$ is the system's output vector and $x(t) \in R^n$ is the system's state vector. Next, we present  two definitions  for  passive system from both state space and input-output perspectives.

\begin{definition}
The nonlinear  system $\Sigma$ with state space \eqref{eq:nonlinearsys} is said to be passive if there exist  \textit{storage function } $L(x(t))$ such that:
\begin{align}\label{eq:storage1}
L(x(t))\leq L(x_0)+\int_0^t\!  u(\tau)^T y(\tau) d \tau.
\end{align}
\end{definition}

The input-output definition of passivity property is given as follows: 
\begin{definition}
The nonlinear  system $\Sigma$  is said to be passive if there exist  constant $\alpha$  such that:
\begin{align}\label{eq:inerpassive}
\langle {\Sigma}u, u\rangle_T \geq\alpha, \forall u \in U, T\in \mathbb{R}_+.
\end{align}
where, $\langle f, g \rangle_T=\int_0^Tf(t)^Tg(t)dt$.
\end{definition}

Remark: In the case of  equality in the inequalities  \eqref{eq:storage1} and \eqref{eq:inerpassive}, the system is said to be \textit{lossless}. 

The stability of the  feedback interconnection between passive systems is  a fundamental result in passivity theory (e.g., \cite{VanderSchaft2012}).  That is, the negative feedback interconnection between a passive system  $\Sigma_1$ and strictly passive  $\Sigma_2$,  as shown in  Fig. \ref{fig1},  is stable  feedback interconnection. Also, the closed loop system from $r$ to $y_1$ is passive. 

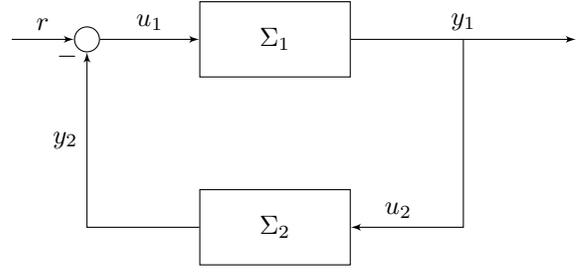
\begin{figure}
\begin{center}
\begin{tikzpicture}[auto, node distance=2.5cm,>=latex']
    \node [input, name=input] {};
    \node [sum, right of=input] (sum) {};
   \node [block, right of=sum] (system) {$\Sigma_1$};
    \node [output, right of=system] (output) {};
    \node [block, below of=system] (controller) {$\Sigma_2$};
    \draw [draw,->] (input) -- node {$r$} (sum);
    \draw [->] (sum) -- node {$u_1$} (system);
    \draw [->] (system) -- node [name=y] {$y_1$}(output);
    \draw [->] (y) |- node [above,pos=0.79]{$u_2$} (controller) ;
    \draw [->] (controller) -| node[pos=0.99] {$-$} 
        node [near end] {$y_2$} (sum);
\end{tikzpicture}
\end{center}
        \caption{Negative  feedback interconnection. }\label{fig1}
\end{figure}

The following definition  defines the \textit{$\delta$-passive} and  the \textit{$\delta$-anti-passive} dynamics. The definition was introduced in \cite{Fox2013}, where the connection between passivity property  and games were established.  
\begin{definition}
The input-output operator $S : U\longrightarrow Y $ is  said  to be 
\begin{itemize}
\item $\delta$-passive if  there exists a constant $\alpha$ such that:
\begin{align*}
\langle (\dot{Su}), \dot{u}\rangle_T \geq\alpha , \forall u \in U, T\in \mathbb{R}_+. 
\end{align*}
\item input strictly $\delta$-passive if  there exists a constant $\alpha$ and $\beta >0$ such that
\begin{align*}
\langle (\dot{Su}), \dot{u}\rangle_T \geq\alpha +\beta \langle (\dot{u}), \dot{u}\rangle_T, \forall  u \in U, T\in \mathbb{R}_+.
\end{align*}
\item $\delta$-anti-passive if $-S$ is $\delta$-passive.
\end{itemize}
\end{definition}

The following proposition derives the relationship  between  passivity and $\delta$-passivity for linear systems.  
Consider the linear time invariant (LTI)  system with $R$ as an input-output mapping and the  following  state space representation:
\begin{equation}
\begin{aligned}
\label{eq:PR1x}
& \dot{x}(t)=Ax(t)+Bu(t)\\
 & y(t)=Cx(t)+Du(t), \;\;\;\;\; x(0)=x_0,
\end{aligned}
\end{equation}
where, $A \in \mathbb{R}^{n \times n},B \in \mathbb{R}^{n \times
m},C \in \mathbb{R}^{m \times n},$ and $D \in \mathbb{R}^{m \times
m}$.

\begin{proposition}\label{lem:p-dp}
The input-output mapping $R$ is passive if and only if it is  $\delta$-passive.
\end{proposition}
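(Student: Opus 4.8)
The plan is to route both notions through the classical positive real (Kalman--Yakubovich--Popov) characterization of passivity for LTI systems, rather than manipulating the time integrals directly. First I would recall that, for the realization \eqref{eq:PR1x}, $R$ is passive if and only if the transfer function $G(s)=D+C(sI-A)^{-1}B$ is positive real, equivalently there exist a symmetric $P\geq 0$ and matrices $L,W$ with $PA+A^{T}P=-L^{T}L$, $PB=C^{T}-L^{T}W$, $W^{T}W=D+D^{T}$; along any solution of \eqref{eq:PR1x} this gives the dissipation identity $\frac{d}{dt}\big(x^{T}Px\big)=2u^{T}y-\norm{Wu+Lx}^{2}\leq 2u^{T}y$ (see, e.g., \cite{VanderSchaft2012}). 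The whole argument then hinges on one elementary observation: differentiating \eqref{eq:PR1x} in time and writing $\xi:=\dot x$ yields $\dot\xi=A\xi+B\dot u$, $\dot y=C\xi+D\dot u$, i.e.\ the \emph{same} quadruple $(A,B,C,D)$ now driven by $\dot u$ from the initial state $\xi(0)=Ax_{0}+Bu(0)$. Equivalently, time differentiation commutes with an LTI map and therefore leaves its transfer function unchanged.

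For the implication ``passive $\Rightarrow$ $\delta$-passive'' I would take the certificate $(P,L,W)$ supplied by the positive real lemma and apply the very same dissipation identity to the differentiated system: $\frac{d}{dt}\big(\xi^{T}P\xi\big)=2\dot u^{T}\dot y-\norm{W\dot u+L\xi}^{2}\leq 2\dot u^{T}\dot y$, so that $\langle\dot{(Ru)},\dot u\rangle_{T}=\int_{0}^{T}\dot u^{T}\dot y\,dt\geq\tfrac12\big(\xi(T)^{T}P\xi(T)-\xi(0)^{T}P\xi(0)\big)\geq-\tfrac12\,\xi(0)^{T}P\xi(0)$. The right-hand side is a constant fixed by the initial data $(x_{0},u(0))$, exactly in the way the constant in the input--output passivity definition is fixed by $x_{0}$. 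Hence $R$ is $\delta$-passive; moreover the slack term $\norm{W\dot u+L\xi}^{2}$ is the derivative analogue of $\norm{Wu+Lx}^{2}$, so a lossless $R$ is also $\delta$-lossless, which is the property used later for replicator dynamics.

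For the converse, given an input $v$ I would feed $R$ its antiderivative $u(t)=\int_{0}^{t}v(s)\,ds$, so that $\dot u=v$ and $u(0)=0$; by the observation above $\dot{(Ru)}$ splits as the zero-state response $R_{0}v$, linear in $v$, plus the fixed time function $Ce^{At}z_{0}$ with $z_{0}=Ax_{0}$. The $\delta$-passivity inequality then reads $\langle R_{0}v,v\rangle_{T}+\langle Ce^{At}z_{0},v\rangle_{T}\geq\alpha$; replacing $v$ by $\lambda v$ and letting $\lambda\to\pm\infty$ forces the quadratic term to be nonnegative, i.e.\ $\langle R_{0}v,v\rangle_{T}\geq0$ for every admissible $v$ and every $T$. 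This is precisely zero-state passivity of the LTI system, which for a finite-dimensional realization is equivalent to positive realness of $G$, and therefore to $R$ being passive by the first step.

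The step I expect to be the main obstacle is the bookkeeping of initial conditions: the natural constant in the $\delta$-passivity estimate depends on $u(0)$ (already for the integrator $\dot x=u$, $y=x$), so the statement has to be read with the constant in the $\delta$-passivity definition allowed to depend on the initial data, just as the ordinary passivity constant depends on $x_{0}$. Apart from that, the only non-routine move is the homogeneity/scaling argument in the converse that isolates zero-state passivity from the $\delta$-passivity inequality; everything else reduces to the positive real lemma together with the fact that differentiation preserves the transfer function of an LTI system.
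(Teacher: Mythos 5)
Your proof is correct and takes essentially the same route as the paper's: both hinge on the observation that differentiating the state equations reproduces the same quadruple $(A,B,C,D)$, so the positive real lemma (KYP characterization) applies identically to the map from $\dot u$ to $\dot y$. You additionally spell out the dissipation identity, the scaling argument for the converse, and the dependence of the passivity constant on $u(0)$ via $\xi(0)=Ax_0+Bu(0)$ --- details the paper's terser argument leaves implicit.
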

\begin{proof}
The positive real lemma (e.g., \protect\cite[p. 70]{lozano2013}) implies that  the  input-output mapping $R$ with the  associated transfer function  $R(s)=\begin{bmatrix}
\begin{array}{c|c}
A & B \\ \hline C & D
\end{array}
\end{bmatrix}$  is positive real if and only if  exist a matrix $P>0$, such that the following linear matrix inequality (LMI):
\begin{align}\label{eq:lmi1}
\begin{bmatrix}
A^T P+PA&PB-C^T\\
B^TP-C&-(D+D^T)
\end{bmatrix}\leq0,
\end{align}
 holds. 
 
Since $\delta$-passive systems are defined for continuously deferentiable inputs and outputs, then the   state space representation for mapping from $\dot{u}$ to  $\dot{y}$ can be obtained from   the derivative  of the state space representation \eqref{eq:PR1x} as follows:
\begin{equation}
\begin{aligned}
 \dot{\widehat{x}}(t)&=A\widehat{x}(t)+B\widehat{u}(t) \label{eq:PR2x}\\
  \widehat{y}(t)&=C\widehat{x}(t)+D\widehat{u}(t), \;\;\;\;\; \widehat{x}(0)=\widehat{x}_0,
\end{aligned}
\end{equation}
where,  $\widehat{x}=\dot{x},\widehat{y}=\dot{y}$ and $\widehat{u}=\dot{u}$.  It is clear that the state space representation \eqref{eq:PR2x}  has the same input-output mapping   $R=\begin{bmatrix}
\begin{array}{c|c}
A & B \\ \hline C & D
\end{array}
\end{bmatrix}$, which implies that the  LMI \eqref{eq:lmi1} is the same  for the system \eqref{eq:PR2x}. This implies that the input-output mapping $R=\begin{bmatrix}
\begin{array}{c|c}
A & B \\ \hline C & D
\end{array}
\end{bmatrix} $ is passive from  $\dot{u}$ to  $\dot{y}$ (i.e., $\delta$-passive) if and only if exist a matrix $P>0$ the LMI \eqref{eq:lmi1} holds. This completes the proof.  
\end{proof}

\subsection{Stable Games}

A game $G$, in general, consist of three basic elements.  Number of \textit{players} $N$:   are  the  decision makers in  the game context. 
\textit{Strategies} $S$:  are  the set   of actions that a particular player will play  given a set of conditions and circumstances that will emerge in  the game being played. 
\textit{Payoff} $P$: is the reward which a player receives from playing  at a particular strategy.

A  population game  has a set of  strategies $S =\{1, 2, ..., m\}$ and  a set of strategy distributions 
$X=\{x: \;\; \Sigma_{i \in S}x_i=1\}$. Since strategies lie in the simplex, admissible changes in strategy are restricted to the tangent space $TX=\{z\in  \mathbb{R}^m : \sum_{i\in s} z_i=0\}$. 

A state $x^*\in X$ is a \textit{Nash equilibrium} if  each strategy in the support of $x$ receives the maximum payoff available to the population.

The definition of \textit{stability} in this context implies that there is an evolutionary stable state i.e., rest point, where the distance between  the population distribution  and this rest point decreases along the population trajectories, i.e., the population converge to this stable state. Therefore, \textit{unstable} feedback loop between learning rule and game means  that the feedback will not converge to a rest point. 

We now recall the definition of \textit{stable games} for continuously differentiable games as presented in \cite{Hofbauer2009,Fox2013}. First, define $F(x) : S\longrightarrow \mathbb{R}^m$ to be  the payoff function that  associate each strategy distribution in $S$ with a payoff vector so that $F_i: S\longrightarrow \mathbb{R}^m$ is the payoff to strategy $i\in S$. Also, define $DF(x)$ to be the Jacobian matrix of $F(x)$. 

\begin{definition}
Suppose that $F(x)$ is continuously differentiable. Then $F$ is  said to be  \textit{stable game} if 
\begin{align*}
z^TDF(x)z\leq 0, \;\;\;\;\;\forall z \in TX.
\end{align*}  
\end{definition}
More detailed discussions  on stable games and there dynamics  are given in \cite{Hofbauer2009,Fox2013}.

%
 The relationship between passivity and stable games is established in \cite{Fox2013}. That is, let $\mathbb{X}$ denote  locally Lipschitz $X$-valued functions over $\mathbb{R}_+$ and $\mathbb{P}$ denote  locally Lipschitz $\mathbb{R}^m$ -valued functions over $\mathbb{R}_+$.
\begin{theorem} [\cite{Fox2013}]
A continuously differentiable stable game mapping $\mathbb{X}$  to $\mathbb{P}$  is $\delta$-anti-passive game. 
\end{theorem}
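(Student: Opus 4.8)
The plan is to unwind the definitions so that the statement reduces to a pointwise application of the stable-game inequality along trajectories. Write $S=F$ for the input--output operator induced by the game: given an input $u=x(\cdot)\in\mathbb{X}$, the output is the payoff trajectory $(Su)(t)=F(x(t))$. Since $F$ is continuously differentiable (hence locally Lipschitz) and $x(\cdot)$ is locally Lipschitz, the composition $t\mapsto F(x(t))$ is locally Lipschitz, so indeed $Su\in\mathbb{P}$ and $\dot{Su}$ exists almost everywhere. By the definition of a $\delta$-anti-passive operator it suffices to show that $-S$ is $\delta$-passive, i.e.\ that there is a constant $\alpha$ with $-\langle (\dot{Su}),\dot{u}\rangle_T\ge\alpha$ for all $u\in\mathbb{X}$ and all $T\in\mathbb{R}_+$; I will establish this with $\alpha=0$, equivalently $\langle (\dot{Su}),\dot{u}\rangle_T\le 0$.

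The core computation is short. By Rademacher's theorem $x(\cdot)$ is differentiable a.e., and the chain rule for a $C^1$ map composed with a locally Lipschitz curve gives $\frac{d}{dt}F(x(t))=DF(x(t))\,\dot{x}(t)$ for a.e.\ $t$. Hence, using that a scalar equals its transpose,
\begin{align*}
\langle (\dot{Su}),\dot{u}\rangle_T=\int_0^T \dot{x}(t)^T DF(x(t))\,\dot{x}(t)\,dt .
\end{align*}
Now, because $x(t)\in X$ for all $t$ we have $\sum_{i\in S}x_i(t)=1$; differentiating this identity shows $\sum_{i\in S}\dot{x}_i(t)=0$, i.e.\ $\dot{x}(t)\in TX$ for a.e.\ $t$. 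Applying the defining inequality of a stable game, $z^T DF(x)z\le 0$ for all $z\in TX$, pointwise with $x=x(t)$ and $z=\dot{x}(t)$ yields $\dot{x}(t)^T DF(x(t))\,\dot{x}(t)\le 0$ almost everywhere, and integrating over $[0,T]$ gives $\langle (\dot{Su}),\dot{u}\rangle_T\le 0$. Thus $-S$ is $\delta$-passive with $\alpha=0$, i.e.\ $F$ is a $\delta$-anti-passive game.

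I do not anticipate a genuine obstacle here: the argument is essentially the chain rule together with the simplex constraint. The point that warrants care is the measure-theoretic bookkeeping --- justifying the almost-everywhere chain rule when a $C^1$ payoff map is composed with a merely locally Lipschitz curve, and confirming that the integrand $\dot{x}^T DF(x)\dot{x}$ is measurable and locally bounded so that every displayed integral is well defined. It is also worth remarking that only the symmetric part of $DF(x)$ enters the quadratic form $\dot{x}^T DF(x)\dot{x}$, so the stable-game hypothesis, which a priori constrains the full Jacobian restricted to $TX$, is used precisely in the form required.
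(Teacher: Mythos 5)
The paper does not prove this theorem itself; it is quoted verbatim from \cite{Fox2013}, and your argument is precisely the standard proof given there. Your reduction --- differentiate $F(x(t))$ along the trajectory via the chain rule, observe that the simplex constraint forces $\dot{x}(t)\in TX$ almost everywhere, and apply the stable-game inequality $z^TDF(x)z\le 0$ pointwise before integrating --- is correct, and your attention to the a.e.\ differentiability of the locally Lipschitz input is the right place to be careful. Nothing further is needed.
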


\subsection{ Higher-order dynamics and games}
In the continuous-time  standard evolutionary dynamics and games, the game is static mapping from strategies $X$  to payoffs $P$,
\begin{equation*}
P=F(x),
\end{equation*}
and the dynamics are restricted first order mapping from payoffs $P$ to strategies $X$,
\begin{equation*}
\dot{x}=V(x,p).
\end{equation*}
The dynamical view of this feedback loop can be extend to a mapping of strategy trajectories to payoff trajectories. This viewpoint allows us to introduce  generalized  forms of dynamics and games, such as higher-order  dynamics and games, to generate these trajectories. 

Higher-order  dynamics can be introduced---independent of the game---through  auxiliary states to the the first order dynamics \cite{Shamma2005,Arslan2006}, which can be interpreted as path dependency. Also, similar  higher-dynamics, but depends on the game,  can be obtained by the direct derivative of the first order dynamics \cite{Laraki2013}.  It has been shown in \cite{Hart2003,Sato2002,Shamma2005,Arslan2006,Laraki2013} that the  modification of the standard dynamics   can exhibit qualitatively different behaviors. One form of  generalized higher-order dynamics obtained by an auxiliary state $z$  is given as follows:
\begin{equation*}
\begin{aligned}
\dot{z}&=f(z,p)\\
x&=g(z).
\end{aligned}
\end{equation*}



 Similarly,  static games  can be generalized by introducing  internal dynamics into the game. This concept is illustrated in \cite{Fox2013} through dynamically modified payoff function coupled with the static game. Therefore, we view  the  higher-order games as a generalization of   standard games  by introducing internal dynamics into the game, i.e., \textit{dynamical system} mapping from strategies $X$  to payoffs $P$.

\section{Necessity Conditions for Stable Interconnection with Passive Systems}
\label{sec:subspace}

This section establishes the following necessity condition: If a system $\Sigma_1$ is stable in the negative feedback interconnection with all passive systems, then, $\Sigma_1$ must be passive. To prove this statement, we recall a necessity result for a small gain condition  \cite{Zhou1996}. We first consider linear systems followed with a linearization based result of nonlinear systems.

\subsection{Small Gain Theorem}\label{subsec:SGTH}

The following proposition provides the necessity  conditions for feedback interconnected systems with  small gain property. The result is part of the small gain theorem provided in \cite{Zhou1996}.

\begin{proposition}[\cite{Zhou1996}] \label{th:sgth}
For any stable system $S$,  associated transfer function matrix $S(s)=\begin{bmatrix}
\begin{array}{c|c}
A_s & B_s \\ \hline C_s & D_s
\end{array}
\end{bmatrix}$ with   $H$-infinity norm  $\Vert S(j\omega) \Vert_\infty   >1$,  
there exists a transfer function matrix $\Delta(s)=\begin{bmatrix}
\begin{array}{c|c}
A_\Delta & B_\Delta \\ \hline C_\Delta & D_\Delta
\end{array}
\end{bmatrix}$ with   $H$-infinity norm $\Vert \Delta (j\omega) \Vert_\infty   <1$,  such that the closed loop feedback is unstable. \QEDB
\end{proposition}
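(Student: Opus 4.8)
The plan is to exploit the structure of the \emph{worst-case frequency} of $S$: first pick a frequency at which the largest singular value of $S$ exceeds one, then build a rank-one, all-pass--type perturbation $\Delta$ that exactly cancels that singular value there while keeping $\Vert\Delta\Vert_\infty<1$ everywhere, and finally read off a closed-loop pole on (or just inside) the imaginary axis. For the first step, since $\Vert S\Vert_\infty=\sup_\omega\bar\sigma\big(S(j\omega)\big)>1$ and $\omega\mapsto\bar\sigma\big(S(j\omega)\big)$ is continuous with a finite limit $\bar\sigma(D_s)$ as $\omega\to\infty$, there is a finite $\omega_0$ with $\sigma_1:=\bar\sigma\big(S(j\omega_0)\big)>1$. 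Taking a singular value decomposition of $S(j\omega_0)$, let $u,v\in\mathbb{C}^m$ be unit-norm left and right singular vectors associated with $\sigma_1$, so that $S(j\omega_0)v=\sigma_1 u$. The matrix we want $\Delta$ to realize at $j\omega_0$ is $\tfrac{1}{\sigma_1}vu^{*}$: it is rank one with spectral norm $1/\sigma_1<1$, and $S(j\omega_0)\cdot\tfrac{1}{\sigma_1}vu^{*}=uu^{*}$, which has $1$ as an eigenvalue, so (up to the feedback sign) $I-S(j\omega_0)\Delta(j\omega_0)$ is singular.

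The main obstacle is realizing this prescribed complex matrix as the value at $s=j\omega_0$ of a genuine real-rational, proper, Hurwitz transfer matrix whose $H$-infinity norm is still strictly below one. I would factor the singular vectors into modulus and phase, $v_k=\rho_k e^{j\theta_k}$ and $u_k=\mu_k e^{j\phi_k}$ with $\rho_k,\mu_k\ge 0$, $\sum_k\rho_k^2=\sum_k\mu_k^2=1$, and realize each phase by a scalar first-order all-pass $a_k(s)=\pm\frac{\alpha_k-s}{\alpha_k+s}$ with $\alpha_k>0$ chosen so that $a_k(j\omega_0)=e^{j\theta_k}$ (and similarly $b_k(j\omega_0)=e^{-j\phi_k}$); each such block has a single pole in the open left half-plane and unit magnitude at every point of the imaginary axis. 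Setting
\[
\Delta(s)\;=\;\pm\frac{1}{\sigma_1}\,\big(\rho_1 a_1(s),\ldots,\rho_m a_m(s)\big)^{T}\big(\mu_1 b_1(s),\ldots,\mu_m b_m(s)\big),
\]
gives $\Delta(j\omega_0)=\pm\tfrac{1}{\sigma_1}vu^{*}$; the transfer matrix is rational, stable, and has real coefficients once complex-conjugate all-pass blocks are combined, and at every frequency it is a rank-one outer product of two vectors of Euclidean norm $\big(\sum_k\rho_k^2\big)^{1/2}=1$ and $\big(\sum_k\mu_k^2\big)^{1/2}=1$, hence $\bar\sigma\big(\Delta(j\omega)\big)\equiv 1/\sigma_1$ and $\Vert\Delta\Vert_\infty=1/\sigma_1<1$. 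The degenerate situations ($\omega_0=0$, phases equal to $0$ or $\pi$, or the supremum of $\bar\sigma$ attained only as $\omega\to\infty$) are handled by replacing the affected all-pass factor with a real constant $\pm1$ or by using a product of two first-order blocks; these are routine.

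To finish, choose the sign of $\Delta$ so that the return difference for the negative-feedback interconnection of Fig.~\ref{fig1}, namely $\det\big(I+S(s)\Delta(s)\big)$, vanishes at $s=j\omega_0$. Then the closed-loop sensitivity $(I+S\Delta)^{-1}$, and hence the map from $r$ to $y_1$, has a pole at $j\omega_0$, so the interconnection is not asymptotically stable. To push the pole strictly into the open right half-plane, replace $\Delta$ by $(1-\varepsilon)\Delta$ or $(1+\varepsilon)\Delta$ for small $\varepsilon>0$: the $H$-infinity norm stays below one, the imaginary-axis root of $\det\big(I+S\Delta\big)$ is simple and moves off the axis, and one of the two sign choices sends it into the open right half-plane by a standard transversality (root-locus) argument. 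This produces a destabilizing $\Delta$ with $\Vert\Delta(j\omega)\Vert_\infty<1$ and completes the proof; the crux, as noted, is the all-pass outer-product construction, which is exactly what reconciles realness, stability, exact interpolation at $\omega_0$, and the uniform sub-unit gain bound.
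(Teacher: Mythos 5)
Your proposal is correct and follows essentially the same route as the paper's (recalled from Zhou et al.): pick a finite frequency $\omega_0$ where $\bar\sigma(S(j\omega_0))=\sigma_1>1$, take the SVD, and realize $\tfrac{1}{\sigma_1}v_1u_1^{*}$ at $s=j\omega_0$ by a rank-one outer product of first-order all-pass factors, so that $\Vert\Delta\Vert_\infty=1/\sigma_1<1$ while the return difference determinant vanishes at $j\omega_0$. If anything you are slightly more careful than the text on the sign convention needed to make the determinant vanish and on perturbing the marginal pole into the open right half-plane, but the core construction is identical.
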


For completeness, we recall the proof in  \cite{Zhou1996} in order to utilize the explicit construction of a destabilizing $\Delta(s)$. 
The  closed loop feedback between  $[S(s), \Delta(s) ]$ is unstable if  $\det(I+S(s)\Delta(s))=0$, i.e., it is sufficient to construct  $\Delta(s)$ with  $\Vert \Delta (j\omega) \Vert_\infty   <1$ such that $\mathrm{det}(I+S(s)\Delta(s))=0.$

Suppose that $\omega_0 \in (0,\infty) $ where $\Vert S(j\omega_0) \Vert_\infty  =\sigma_1 >1$. Let  the singular value decomposition  (SVD) of $S(j\omega_0)$ to be $U \Sigma V^*$, where  $U=\begin{bmatrix}
u_1 & u_2 & \cdots
\end{bmatrix}$  and $V=\begin{bmatrix} 
v_1 & v_2 & \cdots
\end{bmatrix}$ are  unitary matrices. We can rewrite $u_1^{*}$ and $v_1$ as  
$$u_1^{*}=\begin{bmatrix}
u_{11} e^ {j\theta_1} & u_{12} e^ {j\theta_2}& \cdots
\end{bmatrix}$$
and 
$$v_1=\begin{bmatrix}
v_{11} e^ {j\phi_1} \\ v_{12} e^ {j\phi_2} \\ \vdots
\end{bmatrix},$$
where $\theta_i, \phi_i \in (-\pi,0)$ and 
$$\theta_i=\measuredangle \dfrac{\beta_i-j\omega_0}{\beta_i+j\omega_0}$$
$$\phi_i,=\measuredangle \dfrac{\alpha_i-j\omega_0}{\alpha_i+j\omega_0}.$$

Now define 
$$\Delta(s)=\dfrac{1}{\sigma_1}\begin{bmatrix}
v_{11} \dfrac{\alpha_1-s}{\alpha_1+s} \\ v_{12} \dfrac{\alpha_2-s}{\alpha_2+s} \\ \vdots
\end{bmatrix}\begin{bmatrix}
u_{11} \dfrac{\beta_1-s}{\beta_1+s}  & u_{12} \dfrac{\beta_2-s}{\beta_2+s} & \cdots
\end{bmatrix}.$$

This construction of $\Delta(s)$ ensures that 
$$\Vert \Delta (j\omega) \Vert_\infty  =\dfrac{1}{\sigma} <1.$$
It follows that at $s=j\omega_0$, $ \Delta (j\omega_0)   =\dfrac{1}{\sigma} v_1 u_1^*$ and hence 
$$\det(I+S(j\omega_0)\Delta j\omega_0)=\det(I+U \Sigma V^*\dfrac{1}{\sigma} v_1 u_1^*)=0.$$
This completes the proof. 

In other words, Proposition \ref{th:sgth} can be read as follows: if a system  $\Sigma_1$ is stable in the  feedback interconnection with all small gain systems, then $\Sigma_1$ must  have  small gain property.

Following  Proposition \ref{th:sgth},  we recall the relationship  between passivity and small gain property  (e.g., \cite{VanderSchaft2012}), in order   to provide similar result for  passive systems. 

The passivity-small gain  relationship  is known as follows:
\begin{lemma}[\cite{VanderSchaft2012}]
Suppose that $(G(s)+I)$ is invertible, then an LTI  system  $S(s)$ has  small gain property, i.e., $\Vert S \Vert_\infty  <1$, if and only if $G(s)$ is  passive system, where,
\begin{equation}\label{eq:smg-pass}
S(s)=(G(s)-I)(G(s)+I)^{-1} . 
\end{equation}  
\end{lemma}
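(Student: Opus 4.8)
The plan is to reduce the claim to a pointwise (in frequency) linear-algebra statement and exploit the fact that the Cayley-type transform $(\cdot-I)(\cdot+I)^{-1}$ maps matrices with nonnegative Hermitian part to contractions. First I would fix $\omega\in\mathbb{R}\cup\{\infty\}$, abbreviate $M=G(j\omega)$, and note that by hypothesis $M+I$ is invertible, so $S(j\omega)=(M-I)(M+I)^{-1}$ is well defined. The core of the argument is the algebraic identity
\begin{align*}
(M+I)^{*}\big(I-S(j\omega)^{*}S(j\omega)\big)(M+I)
&=(M+I)^{*}(M+I)-(M-I)^{*}(M-I)\\
&=2\,(M+M^{*}),
\end{align*}
which follows by observing that $S(M+I)=M-I$ and then expanding, so that the $M^{*}M$ and $I$ terms cancel. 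Since $M+I$ is nonsingular, the left-hand side is a congruence transformation of $I-S(j\omega)^{*}S(j\omega)$; hence $I-S(j\omega)^{*}S(j\omega)\geq 0$ if and only if $M+M^{*}\geq 0$, and likewise with strict inequalities, since congruence preserves both positive semidefiniteness and positive definiteness.

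Next I would assemble these frequency-wise equivalences into the input–output statements. The condition $G(j\omega)+G(j\omega)^{*}\geq 0$ for all $\omega$ is precisely the positive-real (passivity) condition for the LTI system $G$, while $\Vert S(j\omega)\Vert\leq 1$ for all $\omega$ is $\Vert S\Vert_\infty\leq 1$; so the congruence identity delivers the equivalence directly in the non-strict case. For the strict small-gain conclusion $\Vert S\Vert_\infty<1$ I would use the uniformly strict version of passivity, i.e. $G(j\omega)+G(j\omega)^{*}\geq 2\epsilon I$ for some $\epsilon>0$ uniformly in $\omega$ (including $\omega=\infty$, which for a rational proper transfer matrix is the value $D+D^{*}$); the identity then yields a uniform lower bound on $I-S(j\omega)^{*}S(j\omega)$, hence $\Vert S\Vert_\infty<1$, and conversely a uniform contraction bound pushes back through the identity to uniform strict passivity.

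Finally I would verify that the transform respects stability in both directions, and this is the step I expect to require the most care rather than the essentially one-line congruence identity. If $G$ is positive real then $G(s)+G(s)^{*}\geq 0$ throughout $\mathrm{Re}\,s\geq 0$, so $G(s)+I$ has Hermitian part $\geq 2I$ there and is therefore invertible, which makes $S(s)=(G(s)-I)(G(s)+I)^{-1}$ analytic in the closed right half plane, i.e. stable and proper. Conversely, $\Vert S\Vert_\infty<1$ forces $I-S(s)$ to be nonsingular on $\mathrm{Re}\,s\geq 0$, so $G(s)=(I+S(s))(I-S(s))^{-1}$ introduces no imaginary-axis or right-half-plane poles. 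The main obstacle is precisely this boundary and infinity bookkeeping — ensuring the frequency-domain inequality is uniform so that $\Vert S\Vert_\infty$ is a strict supremum, and that the inversions in the back-and-forth construction do not create poles on the imaginary axis — since the rest of the equivalence is carried entirely by the congruence identity above.
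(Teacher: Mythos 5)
The paper does not actually prove this lemma --- it is quoted from van der Schaft with a citation only --- so there is no in-text argument to compare against. Your proof is correct and is the standard one: the congruence identity $(M+I)^{*}\bigl(I-S(j\omega)^{*}S(j\omega)\bigr)(M+I)=2(M+M^{*})$ is exactly the mechanism by which the Cayley transform exchanges the positive-real condition for the bounded-real condition, and your frequency-by-frequency reduction followed by the stability/properness bookkeeping is the right way to assemble the input--output statement. One point in your favour: as literally stated the lemma is loose, since plain passivity ($G(j\omega)+G(j\omega)^{*}\geq 0$) only yields $\Vert S\Vert_\infty\leq 1$, while the strict inequality $\Vert S\Vert_\infty<1$ requires a uniformly strict positive-real condition $G(j\omega)+G(j\omega)^{*}\geq 2\epsilon I$, including at $\omega=\infty$. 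You identify this and supply the uniform bounds in both directions --- in the converse direction the needed uniform invertibility of $M+I$ follows from $M+I=2(I-S(j\omega))^{-1}$ together with $\Vert S\Vert_\infty<1$ --- which is precisely the care the statement glosses over and which matters downstream in Theorem~\ref{th:passivety}, where ``passive'' and ``strictly passive'' are used interchangeably. The only residual gap is cosmetic: you should fix a convention for what ``passive'' means in the lemma (non-strict versus uniformly strict) and state which of your two equivalences is the one actually being claimed.
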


Now, we are ready to provide the necessity part for linear  passive systems.  
\begin{theorem}\label{th:passivety}
For any  LTI stable strictly non-passive  system $G(s)$, 
there exists   strictly passive  system $R(s)$ such that  the closed loop feedback between $[G(s), R(s) ]$ is unstable. \QEDB
\end{theorem}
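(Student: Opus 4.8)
The plan is to reduce the passivity statement to the small-gain statement of Proposition~\ref{th:sgth} via the Cayley-type transform \eqref{eq:smg-pass}, and then transport the destabilizing perturbation back through that transform. First I would set $S(s) = (G(s)-I)(G(s)+I)^{-1}$, which is well defined because $G$ is stable and strictly non-passive, so $G(j\omega)+I$ is invertible on the imaginary axis (strict non-passivity keeps $G(j\omega)$ bounded away from the region where $G(j\omega)+I$ is singular). By the passivity--small-gain lemma, $G$ strictly non-passive translates into $\Vert S \Vert_\infty > 1$, so Proposition~\ref{th:sgth} supplies a stable $\Delta(s)$ with $\Vert \Delta(j\omega)\Vert_\infty < 1$ such that the feedback loop $[S,\Delta]$ is unstable, i.e.\ $\det(I + S(j\omega_0)\Delta(j\omega_0)) = 0$ at some finite frequency $\omega_0$.

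The second step is to define the candidate strictly passive system $R(s)$ from $\Delta(s)$ by the inverse Cayley transform, $R(s) = (I-\Delta(s))^{-1}(I+\Delta(s))$ (equivalently $(I+\Delta)(I-\Delta)^{-1}$). Since $\Vert \Delta\Vert_\infty < 1$, the operator $I-\Delta(j\omega)$ is invertible on the imaginary axis, $R$ is proper and stable, and the same lemma (applied in the reverse direction) gives that $R$ is strictly passive: the strict inequality $\Vert\Delta\Vert_\infty<1$ is what upgrades passivity of $R$ to strict passivity. One should also note that $R(s)$ inherits a finite-dimensional state-space realization from the explicit rational $\Delta(s)$ constructed in the proof of Proposition~\ref{th:sgth}, so $R$ is a bona fide LTI system of the required type.

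The third step is to verify that the loop $[G,R]$ is unstable, which amounts to showing $\det(I + G(j\omega_0)R(j\omega_0)) = 0$. This is an algebraic identity: substituting $G = (I+S)(I-S)^{-1}$ and $R = (I+\Delta)(I-\Delta)^{-1}$ and simplifying, $I + GR$ factors as a product of invertible terms times $(I + S\Delta)$ (up to a sign and conjugation by $(I-S)^{-1}$ and $(I-\Delta)^{-1}$), so the two determinant conditions are equivalent; since $\det(I+S(j\omega_0)\Delta(j\omega_0)) = 0$, we get $\det(I+G(j\omega_0)R(j\omega_0)) = 0$ and the interconnection is unstable.

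I expect the main obstacle to be the bookkeeping in the last step — tracking which transform ($(G-I)(G+I)^{-1}$ versus $(G+I)(G-I)^{-1}$, and the analogous choice for $R$ versus $\Delta$) is the right one so that the algebraic factorization of $I+GR$ genuinely reproduces $I+S\Delta$ rather than some other pencil, and confirming that none of the intermediate inverses $(G+I)^{-1}$, $(I-S)^{-1}$, $(I-\Delta)^{-1}$ blow up at the critical frequency $\omega_0$. A secondary technical point is making precise the sense in which ``strictly non-passive'' guarantees a uniform bound keeping $G(j\omega)+I$ invertible (and $\Vert S\Vert_\infty$ strictly above, not merely at, $1$), since the construction in Proposition~\ref{th:sgth} requires a finite $\omega_0$ with strict inequality.
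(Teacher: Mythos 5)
Your plan follows the same route as the paper's proof: reduce to the small-gain necessity result via the Cayley transform $S=(G-I)(G+I)^{-1}$, pull the destabilizing $\Delta$ back to a passive $R$ through the inverse transform, and check algebraically that $\det(I+S\Delta)=0$ is equivalent to $\det(I+GR)=0$. The technical caveats you flag (invertibility of $G+I$ and of the intermediate factors at $\omega_0$, and the strictness needed to get $\Vert S\Vert_\infty$ strictly above $1$) are real but are also left implicit in the paper's own argument.
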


\begin{proof}
Let $G$, $S$, $\Delta$ and $R$ denote $G(s)$, $S(s)$, $\Delta(s)$ and $R(s)$ respectively. Suppose that $G$ is  non-passive transfer function with $G+I$ invertible. Using \eqref{eq:smg-pass}, it implies that there exist $S$ such that:
\begin{equation*}
S=(G-I)(G+I)^{-1} \text{ and }\Vert S \Vert _\infty > 1.
\end{equation*}
Using Theorem \ref{th:sgth}, it follows that these exist $\Delta$ such  $\Vert \Delta \Vert_\infty  <1$ and the  closed loop $[S,\Delta]$ is unstable. This implies that:
\begin{equation*}\label{eq:det0}
\det(I+S \Delta)=0.
\end{equation*}
However, $\Vert \Delta \Vert_\infty <1$ implies that there exist passive system $R$ such that:
\begin{equation}
\Delta=(R-I)(R+I)^{-1}.
\end{equation}
This implies that:
\begin{align*}
&\det(I+S \Delta)=0 \\ \notag
\Rightarrow &\det\left(  I+(G-I)(G+I)^{-1}(R-I)(R+I)^{-1}\right) =0\\  \notag
\Rightarrow &\det\left(  I+(G-I)(G+I)^{-1}(R+I)^{-1}(R-I)\right) =0\\  \notag
\Rightarrow &\det((R+I)(G+I))\\&\;\;\; \times \det\left(  (R+I)(G+I)+(R-I)(G-I)\right) =0\\  \notag
\Rightarrow &\det\left(  RG+G+R+I+RG-G-R+I\right) =0\\  \notag
\Rightarrow &\det\left(  2RG+2I\right) =0\\  \notag
\Rightarrow &\det\left(  RG+I\right) =0, \notag
\end{align*}
which implies  the closed loop $[G,R]$ is unstable. This completes the proof. 

\end{proof}

Remark:  The statement of Theorem \ref{th:passivety}  is equivalent to the following statement:
\textit{Suppose that   $G$ is  LTI stable system and forms stable feedback interconnection with all passive systems, then  $G$  must be passive.}

\section{Passivity Analysis for Higher-order dynamics  and  games}\label{sec:HOG}
In this section, we focus on   passivity  analysis of the  higher-order evolutionary dynamics and games. As shown in Fig. \ref{fig:game-dynamic}, games and evolutionary dynamics can be illustrated as a feedback interconnection.  We will show   that if an evolutionary dynamic or a learning rule  is non-$\delta$-passive, then it is possible to  construct higher-order   $\delta$-anti-passive game that results in instability. In other words, a learning rule   results in stability for all higher-order  $\delta$-anti-passive games if and only if the learning rule is  $\delta$-passive.

\begin{figure}
\begin{center}
\begin{tikzpicture}[auto, node distance=3cm,>=latex']
    \node [input, name=input] {};
    \node [sum, right of=input] (sum) {};
   \node [block, right of=sum] (system) {Higher-order  Learning Rule};
    \node [output, right of=system] (output) {};
    \node [block, below of=system] (controller) {Higher-order  Game};
    \draw [->] (sum) -- node {$P$} (system);
    \draw [->] (system) -- node [name=y] {$X$}(output);
    \draw [->] (y) |- node [above,pos=0.79]{$X$} (controller) ;
    \draw [->] (controller) -| node[pos=0.99] {$+$} 
        node [near end] {$P$} (sum);
\end{tikzpicture}
\end{center}
        \caption{Feedback interconection of higher-order dynamic and generlaized game, where $P$ is the payoffs and $X$ is the strategies. }\label{fig:game-dynamic}
\end{figure}
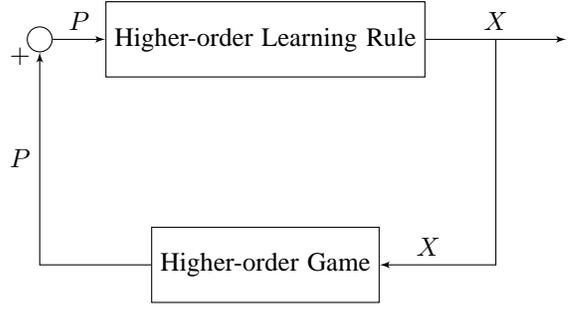

The following theorem provides necessity conditions for  passive feedback interconnections of Higher-order  learning rule and  higher-order  game  as an analogy to Theorem \ref{th:passivety}.  

\begin{theorem}\label{th:main}
If the linearization of the Higher-order  learning rule is not $\delta$-passive, then there exist  $\delta$-anti-passive game that results in unstable  positive feedback interconnection. 
\end{theorem}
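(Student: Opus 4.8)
The plan is to linearize the higher-order learning rule, reduce to the LTI necessity result of Theorem~\ref{th:passivety}, and then realize the destabilizing linear perturbation as a genuine higher-order game. First I would fix an equilibrium $x^{*}\in X$ and a constant payoff $p^{*}$ with respect to which $x^{*}$ is a rest point of the learning rule (for standard learning rules a uniform payoff works), and let $G(s)$ be the transfer function of the linearization of the learning rule about $x^{*}$, viewed as the map from payoff perturbations $\delta p$ to strategy perturbations $\delta x$. If $G(s)$ happens to be unstable, then the constant game $F(x)\equiv p^{*}$ --- which has $DF\equiv 0$, hence is a (trivially lossless, and therefore $\delta$-anti-passive) stable game --- makes the closed loop of Fig.~\ref{fig:game-dynamic} reduce to the learning rule driven by the constant payoff $p^{*}$, for which $x^{*}$ is unstable by assumption, and we are done; so assume $G(s)$ is stable. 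By hypothesis its linearization is not $\delta$-passive, so Proposition~\ref{lem:p-dp} gives that $G(s)$ is not passive as an LTI system.

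Next I would invoke Theorem~\ref{th:passivety}: a stable, strictly non-passive $G(s)$ with $G+I$ invertible admits a strictly passive $R_{0}(s)$ for which the \emph{negative} feedback interconnection with $G$ is unstable, i.e.\ $\det\bigl(I+G(s)R_{0}(s)\bigr)$ has a root in the closed right half plane; the proof of Theorem~\ref{th:sgth} in fact exhibits an explicit rank-one $\Delta$, hence an explicit $R_{0}$. Put $\widehat R(s)=-R_{0}(s)$. Then $-\widehat R=R_{0}$ is strictly passive, hence strictly $\delta$-passive by Proposition~\ref{lem:p-dp}, so $\widehat R$ is $\delta$-anti-passive; and $\det(I-G\widehat R)=\det(I+GR_{0})$, so the \emph{positive} feedback interconnection of $G$ with $\widehat R$ --- precisely the configuration of Fig.~\ref{fig:game-dynamic} --- is unstable. (The exceptional case in which $G+I$ is not stably invertible means the negative feedback of $G$ with the identity is already unstable; then $\widehat R=-I$, i.e.\ the static game $F(x)=p^{*}-(x-x^{*})$, which satisfies $z^{T}DF(x)z=-\Vert z\Vert^{2}\le 0$ and hence is a stable game, destabilizes $G$ directly.)

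It then remains to realize $\widehat R(s)$ as an honest higher-order game. I would take a minimal state-space realization $(A_{g},B_{g},C_{g},D_{g})$ of $\widehat R(s)$ and define the game by the internal dynamics $\dot\xi=A_{g}\xi+B_{g}(x-x^{*})$ together with the output $p=C_{g}\xi+D_{g}(x-x^{*})+p^{*}$. This is a legitimate dynamical map from strategy trajectories in $\mathbb{X}$ to payoff trajectories in $\mathbb{P}$ (note that $x-x^{*}$ already lies in the tangent space $TX$, and one may compose with the orthogonal projection onto $TX$ to absorb the payoff-gauge freedom without affecting the closed loop or the passivity inequality, that projection being lossless); being affine plus a constant, its linearization at $x^{*}$ is exactly $\widehat R(s)$, and the $\delta$-anti-passivity holds not just locally but globally, with storage function $\dot{x}^{T}P\dot{x}$ furnished by the positive real lemma applied to $R_{0}=-\widehat R$. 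Thus it is a bona fide $\delta$-anti-passive higher-order game. Finally, since the linearized positive feedback loop $[G,\widehat R]$ has a pole in the closed right half plane --- which, if necessary, can be moved into the open right half plane by an arbitrarily small perturbation of the all-pass parameters used to build $R_{0}$ --- Lyapunov's indirect method upgrades this to instability of the nonlinear closed loop at $x^{*}$.

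The delicate points are the bookkeeping that keeps the freely-chosen destabilizing LTI block $\widehat R$ compatible with the structure of population games --- the simplex, the restriction to $TX$, and the reading of the output as a payoff vector --- together with the passage from linear instability to genuine instability of the nonlinear interconnection; the remaining items (arranging $G+I$ invertible, and checking that ``not $\delta$-passive'' indeed yields the strict inequality $\Vert(G-I)(G+I)^{-1}\Vert_{\infty}>1$ demanded by Theorem~\ref{th:passivety}) are routine once the passivity--small-gain dictionary is in hand.
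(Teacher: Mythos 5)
Your proposal is correct and follows essentially the same route as the paper's proof: reduce $\delta$-passivity to passivity via Proposition~\ref{lem:p-dp}, invoke Theorem~\ref{th:passivety} to obtain a strictly passive block that destabilizes the linearized learning rule in negative feedback, and flip the sign to read it as a $\delta$-anti-passive game in positive feedback. The extra bookkeeping you supply --- realizing $\widehat R$ as a higher-order game on the tangent space, the exceptional cases, and the Lyapunov upgrade from the linearized to the nonlinear loop --- goes beyond the paper's three-sentence argument but matches how the paper handles those same points in its worked examples.
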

\begin{proof}

Passive systems in negative feedback interconnection are equivalent to  anti-passive systems  in positive feedback interconnection. Also, Proposition \ref{lem:p-dp} implies that passivity and $\delta$-passivity are equivalent for linear systems. That is, if  the learning rule is not $\delta$-passive, then it is not passive.  Therefore, using  Theorem \ref{th:passivety}, it follows  that it is  possible to  construct   $\delta$-anti-passive to destabilize  a given  non-$\delta$-passive learning rule. 
\end{proof}

In the next subsections, we will focus on the passivity analysis of  two familiar classes of higher-order dynamics, Logit dynamics and replicator dynamics. 
\subsection{Second order Logit dynamics }

 Roughly speaking, the logit learning rule can be considered  as a noisy version of the  best response dynamics. In this dynamic, the change in the strategies of the players depend on the level of their  knowledge about the game and  the strategies currently played \cite{McFADDEN1974}.

The second order logit dynamics can be obtained  by introducing an auxiliary state in the payoff function as follows:
\begin{subequations}\label{eq:logitD}
\begin{align} 
\dot{x_i}&=-x_i+\frac{e^{\widehat{p}_i}}{\sum_j e^{\widehat{p}_j}}\label{eq:logitD1}\\
\dot{\widehat{p}_i}&=-\widehat{p}_i+p_i,\label{eq:logitD2}
\end{align}
\end{subequations}
where, $x$ is the state vector, $p$ is the payoff  and  $\widehat{p}$ is the introduced auxiliary state.
The equilibrium conditions are $ \widehat{p}=p$ and $x_i=\sigma_{\text{max}}(\widehat{p}_i)$.

First, we show that the  linearization of \eqref{eq:logitD}  is non-passive system. Hence, according to our results, it is possible to construct   higher-order passive game that results in instability with the second order Logit dynamics.  The linearized system is given as follows:
\begin{subequations}\label{eq:logitDL}
\begin{align}
\delta \dot{x}&= A_x \delta x+\beta\delta \widehat{p} \label{eq:logitDl1}\\
\delta \dot{\widehat{p}} &= A_p\delta \widehat{p}+ B_p\delta p,\label{eq:logitDl2}
\end{align}
\end{subequations}
where, $\delta x =x-x^*$ is the deviation from  equilibrium, $A_x=A_p=-I$,  $\beta =\frac{1}{m}\begin{bmatrix} 1&-1&-1&\cdots\\-1&1&-1&\cdots\\ \vdots&\vdots&\vdots&\vdots\\-1&-1&\cdots&1
\end{bmatrix}$, $m$ is the number of pure strategies and $B_p=I$. 
The linearized logit dynamics   \eqref{eq:logitDL} is a linear second order dynamical system and its  transfer function has relative degree equal two, i.e., the  linearized logit dynamics    is not passive.

Before we analyze the linearized   logit dynamics   \eqref{eq:logitDL} and  construct a higher-order  game that results in unstable feedback interconnection, we will reduce the linearized dynamics using the transformation  $\delta x=\delta x^*+N \delta w$,  $\delta p=\delta p^*+N\delta q$ and $\widehat{p}=N\xi$, where $\delta x^*$, $p^*$ is the equilibrium point,  $N$ is the null space of a vector of ones and satisfies $N^TN=I$. This projection ensure that the dynamics stays in the simplex. 
 The reduced linear dynamical system is given as follows:
 \begin{subequations}\label{eq:logitDLR}
\begin{align}
\begin{bmatrix}
\delta \dot{w}\\ \dot{\xi}
\end{bmatrix}&=\begin{bmatrix}
-I&N^T \beta N\\0&-I
\end{bmatrix}\begin{bmatrix}
\delta {w}\\ \xi
\end{bmatrix}+\begin{bmatrix}
0\\ I
\end{bmatrix}\delta q.
\end{align}
\end{subequations}

Now, consider the case where $i=1,2,3$ and let $\delta W=\begin{bmatrix}
\delta {w}\\ \xi
\end{bmatrix}$. The reduced dynamical system \eqref{eq:logitDLR} is given as follows:
\begin{subequations}\label{eq:logitDLR3}
\begin{align}
\delta\dot{ W}&= \begin{bmatrix}
-1&0&\frac{2}{3}&0\\ 0&-1&0&\frac{2}{3}\\0&0&-1&0\\0&0&0&-1
\end{bmatrix}\delta W+\begin{bmatrix}
0 &0\\0&0 \\
1&0\\0&1
\end{bmatrix}
\delta q\\
\delta y&=\begin{bmatrix}
 1&0&0&0\\0&1&0&0
\end{bmatrix}\delta W,
\end{align}
\end{subequations}
which is  non-passive dynamical system $G(s)$ with the following transfer function matrix:
\begin{align}G(s)=\begin{bmatrix}
\frac{2}{3(s+1)^2}&0\\0&\frac{2}{3(s+1)^2}
\end{bmatrix}.
\end{align}
The construction mechanism provided in this paper will be employed  to construct a passive system (game) that result in unstable feedback loop  with  the second order logit dynamics.  The internal dynamics   of the constructed passive  game is given   as follows:
\begin{subequations}\label{eq:gg}
\begin{align}
& \dot{ z}=A_gz+B_g u_g\label{eq:xdotgg}\\
 &  y_g=C_g z+    D_gu_g,\label{eq:ygg}
\end{align}
\end{subequations}
where, 
\begin{align*}
A_g&=\begin{bmatrix}
0  & -0.8608     &    0   &      0\\
    1.0000  & -1.0791     &    0     &    0\\
         0   &      0     &    0  & -0.8608\\
         0     &    0   & 1.0000   &-1.0791
\end{bmatrix},
\\ B_g&= \begin{bmatrix}
0&0\\10&10\\0&0\\10&10
\end{bmatrix},\;\;C_g=\begin{bmatrix}0&16&0&0\\0&0&0&16
\end{bmatrix},\\
D_g&=\begin{bmatrix}
5.2020  & -4.7980\\
   -4.7980  &  5.2020
\end{bmatrix}.
\end{align*}
 The  internal dynamics of the game  \eqref{eq:gg}  is  passive mapping from $u_g$ to $y_g$. The feedback   interconnection  between the game and   the logit dynamic implies that $u_g=\delta w$ and $y_g=\delta q$. 
Using Proposition \ref{lem:p-dp}, it follows that the above game  is $\delta$-passive from $\delta \dot{w}$ to $\delta \dot{q}$. This implies that there is a storage function $L(z,q)$ such that:
\begin{align}\label{Logit:storage}
L(z,q)-L(0,0)\leq \int_0^t\delta \dot{w}^T(\tau) \delta \dot{q}(\tau)d\tau. 
\end{align}
  Also, the transformations $X=X^*+N\delta w$ and $P=P^*+N\delta q$, implies that  $\dot{X}^T\dot{P}=\delta\dot{w}N^TN \delta\dot{q}=\delta \dot{w}\delta\dot{ q}$. Using \eqref{Logit:storage},  it follows that the game \eqref{eq:gg} is  $\delta$-passive from  $\dot{X}$ to $ \dot{P}$.  Hence,   the higher-order  $\delta$-passive   game  from strategies $X$ to payoff $ P$ is  given as follows:
\begin{subequations}\label{eq:gg2}
\begin{align}
X&=X^* + N\delta w\\
 \dot{ z}&=A_gz+B_g\delta w\label{eq:xdotgg2}\\
   P&=P^* + N(C_g z+    D \delta w).\label{eq:ygg2}
\end{align}
\end{subequations}
Proposition  \ref{lem:p-dp} implies  that the dynamics \eqref{eq:logitDLR3} is not $\delta$-passive. Also, using the fact that $\dot{X}^T\dot{P}=\delta \dot{w}\delta\dot{ q}$, it follows that the dynamics  \eqref{eq:logitDLR3} is not $\delta$-passive from  $\dot{X}$ to  $\dot{P}$. Hence, according to Theorem \ref{th:main}, the feedback interconnection between the constructed $\delta$-anti-passive and the non-$\delta$-passive dynamics result in instability. 
Figs.  \ref{logit-gs} and \ref{logit-g}  shows  the evolution of the states of the   feedback interconnection between the  constructed  game, which is   $\delta$-anti-passive, and  the second order non-$\delta$-passive logit dynamics \eqref{eq:logitD}. 
\begin{figure}[H]
\centering{\includegraphics[height=9. cm]{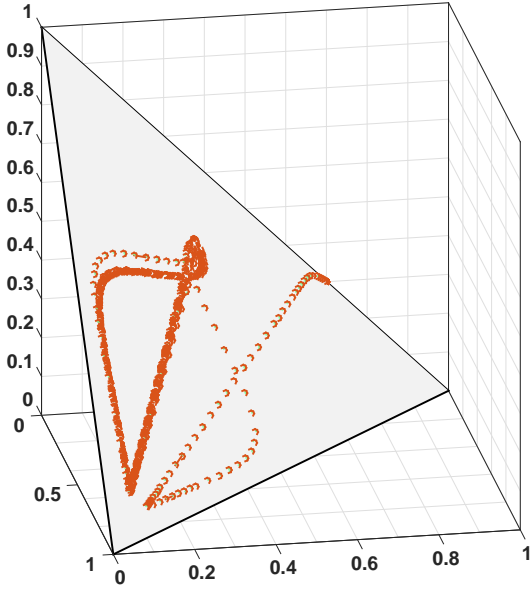}}
\caption{The evolution of the states  of the  constructed  $\delta$-anti-passive game in feedback loop with the non-$\delta$-passive second order logit dynamics \eqref{eq:logitD} projected into the simplex.  }\label{logit-gs}.
\end{figure}

\begin{figure}[H]
\centering{\includegraphics[height=5.1 cm]{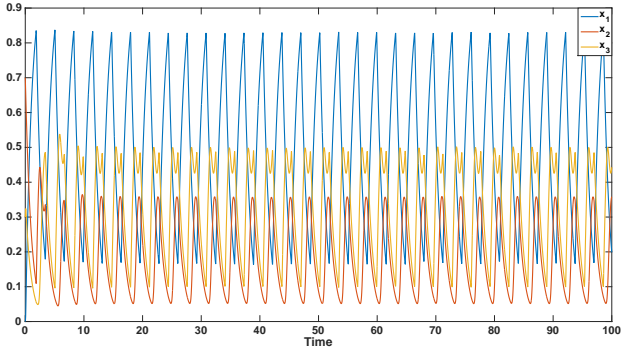}}
\caption{The evolution of the states of the  constructed $\delta$-anti-passive game in feedback loop with the non-$\delta$-passive second order logit dynamics \eqref{eq:logitD}.  }\label{logit-g}.
\end{figure}

\subsection{Replicator dynamics}\label{sec:RD}
Replicator dynamics are important class of evolutionary dynamics, which emerges originally from system biology and nature evolution \cite{Schuster1983}. It provides way to represent selection among a population of diverse types.


 First, we show that first order replicator dynamics are indeed passive dynamics. In particular, replicator dynamics belongs to special class of passive systems known as lossless systems.

 The replicator dynamics is  given as follows:
\begin{align}\label{eq:RD}
\dot{x_i}=x_i(p_i-\sum_j x_j p_j),
\end{align}
where $p_i$ is the payoff for using strategy $i$.

Let $x^*$ to be  a Nash equilibrium for the dynamic. Define  $e_{x_i}=x_i-x^*_i$ to be the deviation from the equilibrium.   The following theorem shows that first order replicator dynamics from the payoff $p_i$ to the error $e_{x_i}$ belongs to a special class of passive systems named  lossless systems. 
\begin{theorem}
 Replicator dynamic mapping from $p_i$ to $e_{x_i}$ is passive lossless.
\end{theorem}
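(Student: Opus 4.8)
The plan is to exhibit an explicit storage function and check that \eqref{eq:storage1} holds with equality along every trajectory. The natural candidate is the relative entropy (Kullback--Leibler divergence) of the equilibrium $x^*$ with respect to the current state,
\begin{align*}
L(x)=\sum_i x_i^*\ln\frac{x_i^*}{x_i},
\end{align*}
which is finite and nonnegative (Gibbs' inequality) whenever $x$ lies in the interior of the simplex $X$. Recall that the replicator dynamics \eqref{eq:RD} render the interior of $X$ forward invariant, so $L(x(t))$ stays finite for all $t\geq 0$ provided $x(0)$ is interior; this invariance is the one structural fact about \eqref{eq:RD} that the argument relies on.

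First I would differentiate $L$ along solutions of \eqref{eq:RD}. Since $\dot x_i/x_i=p_i-\sum_j x_j p_j$ on the interior, we obtain
\begin{align*}
\dot L=-\sum_i x_i^*\,\frac{\dot x_i}{x_i}=-\sum_i x_i^*\Bigl(p_i-\sum_j x_j p_j\Bigr)=-\sum_i x_i^* p_i+\Bigl(\sum_i x_i^*\Bigr)\sum_j x_j p_j.
\end{align*}
The simplex constraint $\sum_i x_i^*=1$ then collapses the second term, giving $\dot L=\sum_i x_i p_i-\sum_i x_i^* p_i=\sum_i p_i\,(x_i-x_i^*)=p^{T}e_x$.

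Integrating this identity yields $L(x(t))-L(x(0))=\int_0^t p(\tau)^{T}e_x(\tau)\,d\tau$ for all $t\geq 0$, which is precisely the storage-function inequality \eqref{eq:storage1} holding with equality; by the remark on losslessness, the input--output map $p\mapsto e_x$ is therefore lossless, as claimed. (Consistently with the tangent-space setting, note $e_x\in TX$ since $\sum_i e_{x_i}=0$, so the value of $p^{T}e_x$ is unaffected by adding a constant to all components of $p$.)

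The computation is routine; the two points that need care are (i) verifying that $L$ is a bona fide (finite, nonnegative) storage function, which rests on forward invariance of the interior of $X$ under \eqref{eq:RD} together with the support condition $\mathrm{supp}(x^*)\subseteq\mathrm{supp}(x(t))$, and (ii) the cancellation of the population-average payoff term $\sum_j x_j p_j$, which is exactly where the normalization $\sum_i x_i^*=1$ enters. I expect (i) to be the only genuine obstacle, and it is dispatched by the standard invariance argument for replicator dynamics. Note finally that nothing in the derivative computation uses that $x^*$ is a Nash equilibrium --- only that $x^*\in X$ --- so losslessness in fact holds relative to any fixed reference point in the simplex.
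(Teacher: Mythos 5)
Your proof is correct and follows essentially the same route as the paper: the storage function $L(x)=\sum_i x_i^*\ln(x_i^*/x_i)$ is exactly the paper's $V(e_x)=-\sum_i x_i^*\ln\frac{e_{x_i}+x_i^*}{x_i^*}$, and the derivative computation collapsing the mean-payoff term via $\sum_i x_i^*=1$ to get $\dot L=p^{T}e_x$ is the same. Your added remarks on forward invariance of the interior (needed for the storage function to be finite) and on $x^*$ not needing to be a Nash equilibrium are sensible refinements of the same argument rather than a different one.
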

\begin{proof}
Using  $e_{x_i}=x_i-x^*_i$, the replicator dynamic equation \eqref{eq:RD} can be written as follows:
\begin{align}
\dot{e}_{x_i}=(e_{x_i}+x^*_i)(p_i-\sum_j (e_{x_j}+x^*_j) p_j).
\end{align}
Define the following storage function, 
\begin{align}
V(e_{x})=-\sum_i x^*_i\ln\frac{e_{x_i}+x^*_i}{x^*_i}.
\end{align}
Note  that $V(0)=0$, and 
\begin{align*}
V(e_{x})&=-\sum_i x^*_i\ln\frac{e_{x_i}+x^*_i}{x^*_i}\\&\geq -\ln\sum_i x^*_i\frac{e_{x_i}+x^*_i}{x^*_i}\\&=\sum_i x_i=1,
\end{align*}
i.e., $V(e_{x})\geq 0$.

Now, the derivative of the storage function is given as follows:
\begin{align*}
\dot{V}(e_{x})&=-\sum_i x^*_i\frac{1}{e_{x_i}+x^*_i}\dot{e}_{x_i}\\
&=-\sum_i x^*_i(p_i-\sum_j (e_{x_j}+x^*_j) p_j)\\
&=-\sum_i x^*_ip_i+\sum_i x^*_i(\sum_j e_{x_j}p_j+\sum_jx^*_j p_j)\\
&=-\sum_i x^*_ip_i+\sum_j e_{x_j}p_j+\sum_jx^*_j p_j\\
&=\sum_j e_{x_j}p_j.
\end{align*}
This  implies that replicator dynamics is passive (lossless) system.
\end{proof}

It is known that replicator dynamics can exhibit  different behaviors, that is stable, null stable  and unstable depending on the game, (e.g., \cite{Hofbauer2009}). For example one can show that  rock paper scissors game with replicator dynamics can generate these three different behaviors. These  behaviors can be seen as a consequences  of the lossless property of the replicator dynamics. 
%



One  can  show that the linearization of the first order replicator dynamic results in a single  integrator. Now, we will show that  the second order replicators are non-passive dynamics, as the linearization result in double integrator (double poles at the origin).   Hence, according to our result in this paper, it is possible to construct higher-order game that result in instability with the second order replicator dynamics. 

The second order replicator dynamics can be obtained by introducing  auxiliary state $\widehat{p}$ in the payoff function. This results in the  following dynamics:
\begin{subequations}\label{eq:RD2order}
\begin{align}
\dot{x_i}&=x_i(\widehat{p}_i-\sum_j x_j \widehat{p}_j)\label{eq:RD2ordera}\\
\dot{\widehat{p}_i}&=p_i.\label{eq:RD2orderb}
\end{align}
\end{subequations}
The  equilibrium conditions are $ p_i^*=0$, $x_i^*=\sigma_{\text{max}}(\widehat{p}_i^*)$, and $\widehat{p}_i^*=1$ .

The linearization of the replicator dynamics \eqref{eq:RD2order} is given as follows:
\begin{align*}
\delta \dot{x}&= A \delta x+\beta\delta \widehat{p} \\
\delta \dot{\widehat{p} }&= \delta p,
\end{align*}
where, $A=\begin{bmatrix}
-x_1^* &-x_1^*&\cdots& -x_1^*\\
-x_2^* &-x_2^*&\cdots& -x_2^*\\
\vdots&\vdots&\vdots&\vdots\\
-x_n^* &-x_n^*& \cdots& -x_n^*
\end{bmatrix}$, $\beta=I-x^*x^{*T}$ and $x^*$ is any point in the simplex. The reduced system can be obtained using  the transformation $\delta x=\delta x^*+N \delta w$,  $\delta p=\delta p^*+N\delta q$ and $\delta \widehat{p}=N\xi$ as follows:
\begin{align*}
\delta \dot{w}&= N^TA N\delta w+N^T\beta N \xi\\
 \dot{\xi }&= \delta q.
\end{align*}
Now, consider the case where $n=3$ and $x_i^*=\frac{1}{3}$,
\begin{subequations}\label{eq:RDLR3}
\begin{align} 
\begin{bmatrix}\delta \dot{w}\\
 \dot{\xi }
\end{bmatrix} &= \begin{bmatrix} 0&0&1&0\\
0&0&0&1\\
0&0&0&0\\
0&0&0&0
\end{bmatrix} \begin{bmatrix}\delta w\\
\xi
\end{bmatrix} +
\begin{bmatrix}0 &0\\
0&0\\1&0\\0&1 
\end{bmatrix} \delta q \\
\delta y&= \begin{bmatrix}1&0&0&0\\0&1&0&0
\end{bmatrix} \begin{bmatrix}\delta w\\
\xi
\end{bmatrix}.
\end{align}
\end{subequations}
The system \eqref{eq:RDLR3}  is a double integrator, which is not  passive. Accordingly, one can construct higher-order passive game that results in instability with the second order replicator dynamics. For instance,  the higher-order  game  can be constructed  as follows:
\begin{align*}
X&=X^* + N\delta w\\
 \dot{ z}&=\begin{bmatrix}
-1&  0\\
    0  &  -1
\end{bmatrix}z+\begin{bmatrix}
1&0\\0&1
\end{bmatrix}\delta w\\
   P&=P^* + N\begin{bmatrix}
 -1 &  0\\0&-1
\end{bmatrix}  z.\\
\end{align*}
Similarly, as in logit dynamics in the previous section, one can show that  the above game $\delta$-anti-passive from strategies $X$ to payoff $ P$.
 
Figs. \ref{fig:RD-g}  and \ref{RD-simplex-j} shows  the evolution of the states of the  positive feedback interconnection between the  constructed  game, which is   $\delta$-anti-passive, and  second  order non-$\delta$-passive replicator dynamic \eqref{eq:RD2order}.

\begin{figure}[H]
\centering{\includegraphics[height=5 cm]{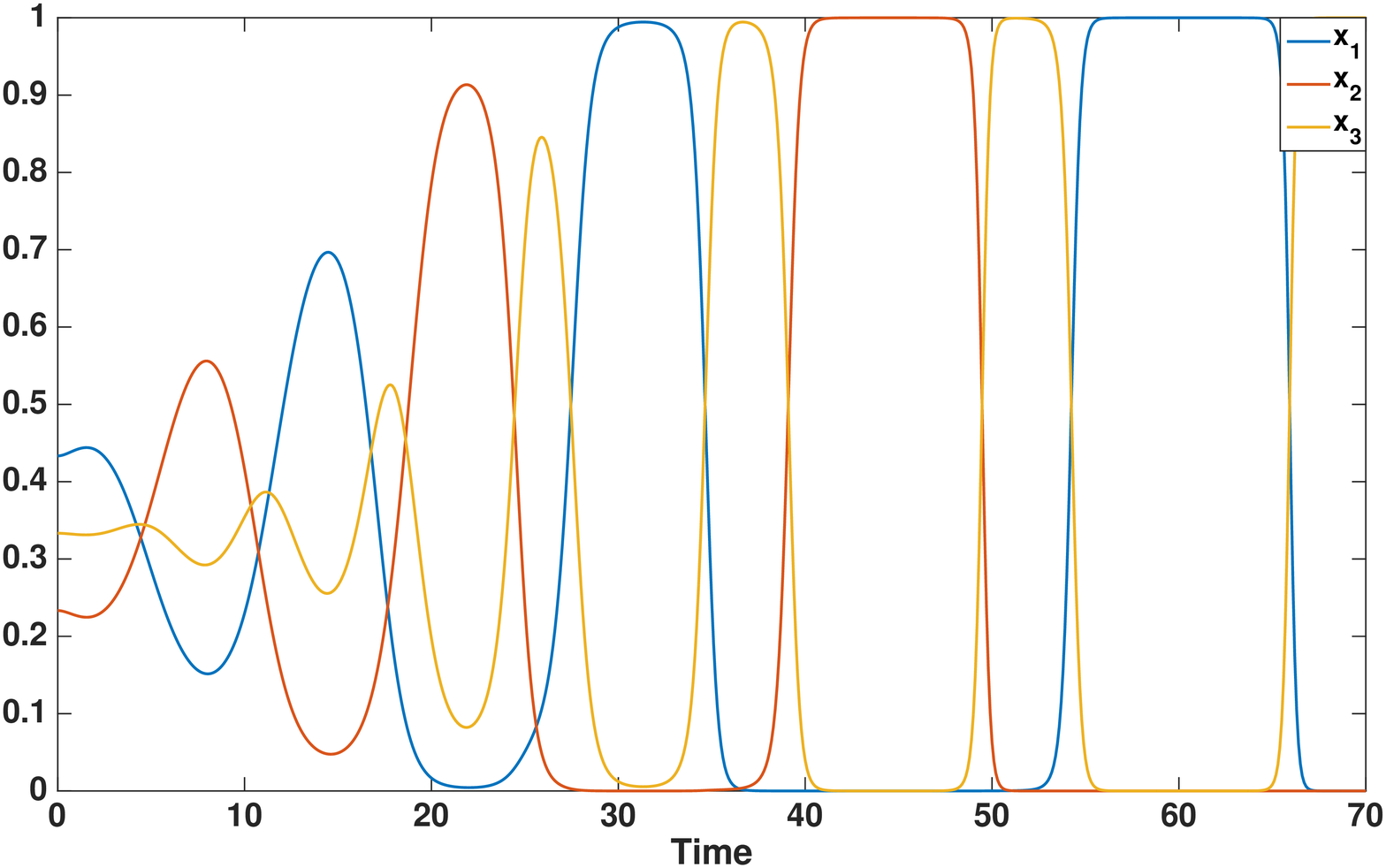}}
\caption{The evolution of the states of the  constructed $\delta$-anti-passive game in feedback loop with the non-$\delta$-passive second order replicator dynamics \eqref{eq:RD2order}.  }\label{fig:RD-g}.
\end{figure}

\begin{figure}[H]
\centering{\includegraphics[height=9 cm]{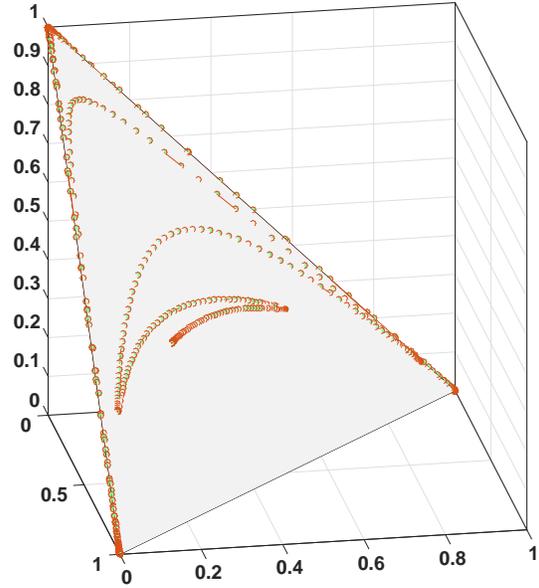}}
\caption{The evolution of the states  of the  constructed $\delta$-anti-passive game in feedback loop with the non-$\delta$-passive second order replicator dynamics \eqref{eq:RD2order} projected into the simplex.  }\label{RD-simplex-j}.
\end{figure}


\section{Concluding Remarks}

In this paper, passivity  analysis  for higher order dynamics and games has been presented. The  necessary conditions for evolutionary dynamics to exhibit stable behaviors for all higher-order passive games is provided. Methods from robust control analysis are used to show that if an evolutionary dynamic does not satisfy the passivity property, then it is possible to construct a higher-order passive game that results in unstable feedback loop. The results is employed to construct a higher-order passive games for two different  dynamics to illustrate the feedback passivity concept in games.

One can conclude similar result (under some detailed  conditions) for  the nonlinear passive  dynamics  by constructing linear non-passive system that result in instability with the linearization of the nonlinear system. In other words, the following conjecture is true under some detailed conditions: 
\textbf{Conjecture}: If a nonlinear system is locally non-passive, i.e., the linearization around an equilibrium point  $(x_0,u_0)$ is non-passive, then it is possible to construct a passive linear system that results in instability with the nonlinear system. This conjecture was illustrated in our discussion on   higher-order dynamics and games.

Similar investigation for first order dynamics was conducted in  \cite{Park2015}. They considered similar question raised  in this paper, but for class of  standard learning dynamics from passivity perspective.  Implications of stability for various passive dynamics both analytically and by means of numerical simulations was discussed.

\bibliographystyle{IEEEtran}
\bibliography{biblio}

\end{document}